\documentclass{amsart}
\usepackage{amssymb}
\usepackage{amsthm}

\newcommand{\pmn}{\par\medskip\noindent}
\newcommand{\pbn}{\par\bigskip\noindent}
\newtheorem{theorem}{Theorem}[section]
\newtheorem{lemma}{Lemma}[section]
\theoremstyle{definition}

\DeclareMathAlphabet{\mathchorus}{OT1}{cmtt}{m}{n}  

\newcommand{\mc}[1]{\mathchorus{#1}} 

\newcommand{\bv}{\begin{pmatrix}}  
\newcommand{\ev}{\end{pmatrix}} 
\newcommand*{\defeq}{\stackrel{\text{def}}{=}} 
\newcommand*{\cat}{\mkern 2mu _{\smallsmile}} 

\DeclareMathOperator{\cp}{\Lambda}

\begin{document}
\title{ 
\hspace{0.055\textwidth}The Hadamard product and recursively defined sequences }
\author{Sergei Kazenas }
\begin{abstract}
In this paper the approach to obtaining nonrecurrent formulas for some recursively defined sequences is illustrated.
The most interesting result in the paper is the formula for the solution of quadratic map-like recurrence.
Also, some formulas for the solutions of linear difference equations with variable coefficients are obtained.
At the end of the paper, some integer sequences associated with a quadratic map are considered.
\end{abstract}

\email{kazenas@protonmail.com} \maketitle

\hspace{0.77\textwidth} {\it{\large{to Margarita}}}

\section{Introduction}

We begin with some notation. Let $ \mc{b}$ be a row vector and $\mc{a}$ be a finite row vector;
$(\mc{b})_j$ denotes $j$th element of vector $\mc{b}$,
$|\mc{b}|$ denotes length of $ \mc{b}$;
$\mc{a} \cat \mc{b} \defeq \bv (\mc{a})_1 ,\ldots , (\mc{a})_{|\mc{a}|} \ , (\mc{b})_1 , \ldots \ev $,
$(\mc{b})_{l}^{m} \defeq \bv (\mc{b})_j \ev _{j=l}^{m}$,
$|\mc{b}|_x \defeq \sum_{j=1}^{|\mc{b}|} (\mc{b})_j x^{j-1}$, 
$\mc{1}_{m} \defeq \bv 1 \ev_{j=1}^{m}$,
$\mc{0}_{m} \defeq \bv 0 \ev_{j=1}^{m}$,
$[ l, m ]_j \defeq ({\mc{1}}_\infty \times ( {\mc{0}}_{l} \cat {\mc{1}}_{m} ) )_j$,
where $\times$ denotes the Kronecker product,
$ l \in \mathbb{N}$,  
$m \in \mathbb{N} \cup \{\infty\}$.
Note that the last function can be expressed by the ceiling function: 
$[ l, m ]_j = \lceil (j + m)/(l + m)\rceil - \lceil j/(l + m)\rceil$.

\pmn Also, we use ordinary notation to denote the corresponding entrywise operations. 
For example, $\mc{a}^2$ expresses the Hadamard square: $\mc{a}^2 = \bv {(\mc{a})_j}^2 \ev _{j=1}^{|\mc{a}|}$. 

\pmn It should be noted that there are many papers on sequences generated by linear difference equations with variable coefficients. 
See, for examle, \cite{Pop1987}, \cite{Kit1993}, \cite{Mal2000}.
The simple approach illustrated here involves constructing for each such sequence a corresponding recursive vector sequence, which can 
be explicitly expressed using the following property of Hadamard product: 
$(\mc{a}\mc{c})\ast (\mc{b}\mc{d})=(\mc{a} \ast \mc{b})(\mc{c} \ast \mc{d})$,
where $|\mc{c}| = |\mc{a}|$, $|\mc{d}| = |\mc{b}|$ and $\ast \in \{ \cat , \times  \}$.

\section{Linear recurrences}

First we use this property with respect to concatenation.

\begin{theorem}
If $x_1$, $x_2$ are arbitrary numbers, $a_n$, $b_n$ are arbitrary number sequences 
and $x_n = a_n x_{n-1} + b_n x_{n-2}$ for $n\geqslant 3$, then
$$
x_n = \sum_{j=1}^{f_n} ((x_1 - x_2)(\mc{f})_j + x_2) \biggl( \prod_{\substack{3 \leqslant k \leqslant n \\ (\mc{f}_k)_j = 1}} b_k  \biggr) 
\biggl( \prod_{\substack{3 \leqslant k \leqslant n \\ (\mc{f}_k + \mc{f}_{k+1})_j = 0}} a_k  \biggr) \text{,}
$$
where $f_n$ is $n$th Fibonacci number, $\mc{f} = \bv 0,1,0,0,\ldots \ev $ is infinite Fibonacci word;
$\mc{f}_k$ is obtained from $\mc{f}$ by replacing each entry of zero with $f_{k-1}$ zeros and each entry of one with $f_{k-2}$ ones.
\end{theorem}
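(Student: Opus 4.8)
The plan is to lift the scalar recurrence to a recurrence for vectors whose $f_n$ entries are exactly the summands, and then to solve that vector recurrence in closed form using the concatenation form of the Hadamard identity. Concretely, I would set $\mc{x}_1 \defeq \bv x_1 \ev$, $\mc{x}_2 \defeq \bv x_2 \ev$ and $\mc{x}_n \defeq (a_n \mc{x}_{n-1}) \cat (b_n \mc{x}_{n-2})$ for $n \ge 3$, where $a_n \mc{x}_{n-1}$ is entrywise scaling. Since $|\mc{x}_n| = |\mc{x}_{n-1}| + |\mc{x}_{n-2}|$ with $|\mc{x}_1| = |\mc{x}_2| = 1$, the length is $f_n$; and evaluating $|\cdot|_1$ (the sum of the entries) turns concatenation into addition, giving $|\mc{x}_n|_1 = a_n |\mc{x}_{n-1}|_1 + b_n |\mc{x}_{n-2}|_1$, so a one-line induction yields $x_n = |\mc{x}_n|_1 = \sum_{j=1}^{f_n} (\mc{x}_n)_j$. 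It then suffices to match $(\mc{x}_n)_j$ with the $j$th summand of the claimed formula.

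To obtain a closed form for $(\mc{x}_n)_j$ I would peel off one coefficient layer per index. Writing $a_n \mc{x}_{n-1} = (a_n \mc{1}_{f_{n-1}})\,\mc{x}_{n-1}$ and $b_n \mc{x}_{n-2} = (b_n \mc{1}_{f_{n-2}})\,\mc{x}_{n-2}$ and applying $(\mc{a}\mc{c})\cat(\mc{b}\mc{d}) = (\mc{a}\cat\mc{b})(\mc{c}\cat\mc{d})$ repeatedly — extended to several factors of matching lengths, padding the layer $k = n-1$ absent from $\mc{x}_{n-2}$ with $\mc{1}_{f_{n-2}}$ — an induction on $n$ factors $\mc{x}_n$ under the Hadamard product as $\mc{x}_n = \mc{s}_n \prod_{k=3}^{n} \mc{c}^{(k)}_n$. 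Here $\mc{s}_n$ obeys the coefficient-free recurrence $\mc{s}_n = \mc{s}_{n-1} \cat \mc{s}_{n-2}$, $\mc{s}_1 = \bv x_1 \ev$, $\mc{s}_2 = \bv x_2 \ev$, while each layer $\mc{c}^{(k)}$ is born at level $k$ as $\mc{c}^{(k)}_k = (a_k \mc{1}_{f_{k-1}}) \cat (b_k \mc{1}_{f_{k-2}})$ (with the convention $\mc{c}^{(k)}_{k-1} \defeq \mc{1}_{f_{k-1}}$) and thereafter propagated by the pure concatenation $\mc{c}^{(k)}_n = \mc{c}^{(k)}_{n-1} \cat \mc{c}^{(k)}_{n-2}$. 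In position $j$ the layer $\mc{c}^{(k)}_n$ records the factor ($b_k$, $a_k$, or $1$) contributed to the $j$th summand by the index $k$.

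The main work — and the main obstacle — is to match these explicit vectors with the Fibonacci-word data. For $\mc{s}_n$ one uses the standard fact that the words $\mc{s}_n$ (under $x_2 \mapsto 0$, $x_1 \mapsto 1$) are nested prefixes of the infinite Fibonacci word $\mc{f}$, giving $(\mc{s}_n)_j = (x_1 - x_2)(\mc{f})_j + x_2$. For the $b_k$-positions, the point is that the map $\sigma_k$ sending $0 \mapsto \mc{0}_{f_{k-1}}$ and $1 \mapsto \mc{1}_{f_{k-2}}$ is a word homomorphism, so $\sigma_k(\mc{s}_m) = \sigma_k(\mc{s}_{m-1}) \cat \sigma_k(\mc{s}_{m-2})$ satisfies the same concatenation recurrence as the indicator of $b_k$ inside $\mc{c}^{(k)}_n$; checking the two seeds identifies that indicator with $\sigma_k(\mc{s}_{n-k+3})$, a prefix of $\mc{f}_k = \sigma_k(\mc{f})$. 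One must confirm the blocks line up at length $f_n$, which holds because a length-$f_m$ Fibonacci prefix has $f_{m-1}$ zeros and $f_{m-2}$ ones, so $|\sigma_k(\mc{s}_m)| = f_{m-1}f_{k-1} + f_{m-2}f_{k-2} = f_{m+k-3}$ by the addition formula $f_{a+b-1} = f_a f_b + f_{a-1}f_{b-1}$; hence the $b_k$-indicator of $\mc{x}_n$ is $\bigl((\mc{f}_k)_j\bigr)_{j=1}^{f_n}$, as required. For the $a_k$-positions I would argue combinatorially: each summand is a tiling of $\{1,\dots,n\}$ by $a$-steps (length $1$) and $b$-steps (length $2$) down to the base index $1$ or $2$; the index $k$ is skipped exactly when a $b$-step is taken at $k+1$, i.e.\ when $(\mc{f}_{k+1})_j = 1$, and carries $b_k$ exactly when $(\mc{f}_k)_j = 1$, so it carries $a_k$ iff $k$ is visited and not $b$-stepped, i.e.\ iff $(\mc{f}_k)_j = (\mc{f}_{k+1})_j = 0$ — the stated condition $(\mc{f}_k + \mc{f}_{k+1})_j = 0$, the case $k = n$ being immediate since $\mc{f}_{n+1}$ begins with $f_n$ zeros. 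Substituting these three identifications into $\mc{x}_n = \mc{s}_n \prod_k \mc{c}^{(k)}_n$ and summing over $j$ yields the theorem.
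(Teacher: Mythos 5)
Your proposal is correct, and it shares the paper's core strategy: lift the scalar recurrence to a vector recurrence whose $f_n$ entries are the individual summands, factor that vector as an entrywise (Hadamard) product of coefficient layers via $(\mc{a}\mc{c})\cat(\mc{b}\mc{d})=(\mc{a}\cat\mc{b})(\mc{c}\cat\mc{d})$, identify each layer with a generalized Fibonacci word, and sum the entries. The execution, however, differs genuinely in both key steps. To get the factorization, the paper converts the second-order vector recurrence into a first-order one: it forms the cumulative vector $\mc{y}_n=\mc{y}_{n-1}\cat\mc{x}_n$ and the suffix operator $\cp_k\mc{b}\defeq\mc{b}\cat(\mc{b})_k^{|\mc{b}|}$, solves $\mc{y}_n=\mc{p}_n\cp_{f_{n-1}}\mc{y}_{n-1}$ by unrolling into a product (the same ``solve a first-order recursion'' template it reuses in its later theorems), then re-partitions the result into blocks and finally reverses every vector, since its $\mc{x}_n=(b_n\mc{x}_{n-2})\cat(a_n\mc{x}_{n-1})$ is built in the order opposite to the Fibonacci word. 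You instead build the vectors in prefix-compatible order $(a_n\mc{x}_{n-1})\cat(b_n\mc{x}_{n-2})$ from the start (which is exactly the paper's reversed vector $(\mc{x}_n)_{-}$) and prove the layer factorization $\mc{x}_n=\mc{s}_n\prod_{k=3}^n\mc{c}^{(k)}_n$ directly by induction on $n$, with the padding convention $\mc{c}^{(k)}_{k-1}=\mc{1}_{f_{k-1}}$ absorbing the two-term structure; this eliminates the $\mc{y}_n$, $\cp_k$, and reversal machinery entirely. Moreover, where the paper merely asserts the Fibonacci-word identifications of its blocks $(\mc{y}_n')_{-}$ and $(\mc{p}_{k,n}')_{-}$, you actually prove them: the morphism property of $\sigma_k$ together with the seed check and the length identity $f_{m-1}f_{k-1}+f_{m-2}f_{k-2}=f_{m+k-3}$ handles the $b_k$-layer, and the tiling interpretation (index $k$ is skipped iff a $b$-step is taken at $k+1$, with the boundary case $k=n$ noted) handles the $a_k$-layer. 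The paper's formalism is more mechanical and uniform with the rest of the paper; your induction is more elementary and self-contained, and it substantiates exactly the identification steps that the paper leaves unproved.
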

\begin{proof} Define vectors: $\mc{x}_1 = \bv x_1 \ev$, $\mc{x}_2 = \bv x_2 \ev$, 
$\mc{x}_n = (b_n \mc{x}_{n-2}) \cat (a_n \mc{x}_{n-1})$ for $n\geqslant 3$ and
$\mc{y}_1 = \mc{x}_1$, $\mc{y}_n = \mc{y}_{n-1} \cat \mc{x}_n$, 
$\mc{p}_n = \mc{1}_{f_{n+1}-1} \cat (b_n \mc{1}_{f_{n-2}}) \cat  (a_n \mc{1}_{f_{n-1}}) $ 
for $n\geqslant 2$.
\pmn Let $\cp_k \mc{b} \defeq \mc{b} \cat (\mc{b})_k^{|\mc{b}|}$. 
We have $\mc{y}_n = \mc{p}_n \cp_{f_{n-1}} \mc{y}_{n-1}$ for $n\geqslant 3$, 
from which it follows that $\mc{y}_n = \mc{y}_{2,n} \prod_{k=3}^n\mc{p}_{k,n}$, 
where $\mc{y}_{2,n} = \cp_{f_{n-1}}\cp_{f_{n-2}}\ldots\cp_{f_2}\mc{y}_2$,
$\mc{p}_{k,n} = \cp_{f_{n-1}}\cp_{f_{n-2}}\ldots\cp_{f_k}\mc{p}_k$.
\pmn Partition $\mc{y}_{2,n} = \mc{y}_1' \cat \ldots \cat \mc{y}_n'$ such that $|\mc{y}_i'| = f_i$,
then $\mc{y}_1' = \mc{x}_1$, $\mc{y}_2' = \mc{x}_2$, $\mc{y}_n' = \mc{y}_{n-2}' + \mc{y}_{n-1}'$ for $n\geqslant 3$.
Similarly partition $\mc{p}_{k,n} = \mc{p}_{k,1}' \cat \ldots \cat \mc{p}_{k,n}'$ such that $|\mc{p}_{k,i}'| = f_i$,
then $\mc{p}_{k,i}'=\mc{1}_{f_i}$ for $1 \leqslant i\leqslant k-1$, $\mc{p}_{k,k}'=(b_k \mc{1}_{f_{k-2}}) \cat (a_k \mc{1}_{f_{k-1}})$,
$\mc{p}_{k,i}' = \mc{p}_{k,i-2}'\cat \mc{p}_{k,i-1}'$ for $k+1 \leqslant i\leqslant n$.
\pmn Note that $(\mc{x}_n)_{-} = (\mc{y}_n')_{-} \prod_{k=3}^n(\mc{p}_{k,n}')_{-}$,
where by $(\mc{a})_{-}$ we denote the vector composed of elements of $\mc{a}$ in reverse order.
Now $(\mc{y}_n')_{-}$ и $(\mc{p}_{k,n}')_{-}$ can be expressed in terms of infinite generalized Fibonacci words:
$(\mc{y}_n')_{-} = (x_1 - x_2) ( \mc{f} )_1^{f_n}  + x_2 \mc{1}_{f_n}$,
$(\mc{p}_{k,n}')_{-} = ( \mc{f}_{k+1} + b_k \mc{f}_k + a_k (\mc{1}_{f_n}-\mc{f}_{k+1}-\mc{f}_k))_1^{f_n}$. 
\pmn Finally using $x_n = |(\mc{x}_n)_{-}|_1$ we get the result. \end{proof}
\pmn The same sequence can be expressed with help of the Kronecker product.

\begin{theorem}
If $x_1$, $x_2$ are arbitrary numbers, $a_n$, $b_n$ are arbitrary number sequences 
and $x_n = a_n x_{n-1} + b_n x_{n-2}$ for $n\geqslant 3$, then
$$
x_n = \sum_{\substack{2^{n-2} + 1 \leqslant j \leqslant 2^{n-1} \\  \vartheta(2^{n-1}-j+1)= 1} } 
((x_2-x_1)[ 1,1 ]_j + x_1)
\prod_{\substack{0 \leqslant k \leqslant n-3 \\ [ 3\cdot2^k,2^k ]_j =1 }}a_{k+3}
\prod_{\substack{0 \leqslant k \leqslant n-3 \\ [ 2^k,2^k ]_j =0 }}b_{k+3} \text{,}
$$
where $\vartheta (n) \normalfont{\defeq} \prod_{k=0}^{\infty} (1-[ 3\cdot2^k,2^k ]_n)$.
\end{theorem}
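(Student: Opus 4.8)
The plan is to recognize the right-hand side as a disguised transfer-matrix expansion of the recurrence and then to show that the three bracket conditions, together with $\vartheta$, merely extract binary digits of the summation index. First I would write the recurrence in matrix form: set $v_k \defeq \bv x_k \\ x_{k-1} \ev$ and $M_k \defeq \bv a_k & b_k \\ 1 & 0 \ev$, so that $v_k = M_k v_{k-1}$ for $k \geqslant 3$ with $v_2 = \bv x_2 \\ x_1 \ev$, whence $x_n = \bv 1 & 0 \ev M_n M_{n-1}\cdots M_3\, v_2$. Expanding this product entrywise gives a sum over index paths $(p_n,p_{n-1},\ldots,p_2)\in\{1,2\}^{\,n-1}$ with $p_n=1$, each contributing $(v_2)_{p_2}\prod_{k=3}^{n}(M_k)_{p_k,p_{k-1}}$. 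The matrix entries read off the weights: a step with $(p_k,p_{k-1})=(1,1)$ contributes $a_k$, one with $(1,2)$ contributes $b_k$, one with $(2,1)$ contributes $1$, and since $(M_k)_{2,2}=0$, any path with two consecutive $2$'s vanishes.

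Next I would set up the bijection between surviving paths and the admissible indices $j$. Encode a path by the bits $\beta_k \defeq 2-p_{k+2}$ for $0\leqslant k\leqslant n-2$; since $p_n=1$ this forces $\beta_{n-2}=1$, so $j-1\defeq\sum_{k}\beta_k 2^k$ ranges exactly over $[2^{n-2},2^{n-1}-1]$, i.e.\ $j$ over $[2^{n-2}+1,2^{n-1}]$. I would then check, directly from the ceiling description of $[l,m]_j$, that each bracket is a digit extractor of $j-1$: one has $[1,1]_j=\beta_0$, so $(x_2-x_1)[1,1]_j+x_1$ recovers the base weight $(v_2)_{p_2}$; next $[3\cdot 2^k,2^k]_j=1$ exactly when $\beta_k=\beta_{k+1}=1$, marking the $(1,1)$-steps that carry $a_{k+3}$; and $[2^k,2^k]_j=0$ exactly when $\beta_k=0$, marking the steps that carry $b_{k+3}$. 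The one subtle point is that the $b$-product tests only the single bit $\beta_k$, whereas a genuine $(1,2)$-step also needs $\beta_{k+1}=1$; this is harmless precisely because the forbidden configuration $\beta_k=\beta_{k+1}=0$ is excluded from the sum, so within admissible indices $\beta_k=0$ already implies $\beta_{k+1}=1$.

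It remains to identify the $\vartheta$-condition with ``no two consecutive $2$'s''. The computation I expect to be the crux is a complementation identity: writing $j-1=2^{n-2}+s$ with $0\leqslant s\leqslant 2^{n-2}-1$, one gets $(2^{n-1}-j+1)-1=(2^{n-2}-1)-s$, which is the bitwise complement of $s$ in $n-2$ bits. Since by its definition $\vartheta(m)=1$ means precisely that $m-1$ has no two adjacent $1$-bits, the condition $\vartheta(2^{n-1}-j+1)=1$ is equivalent to ``no two adjacent $0$'s in $s$'', i.e.\ to ``no two consecutive $2$'s in the path'' (the boundary bit $\beta_{n-2}=1$ causing no trouble). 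Thus the constrained sum over $j$ runs exactly over the surviving paths, each admissible $j$ reproducing the weight of its path, and the total equals $\bv 1 & 0 \ev M_n\cdots M_3 v_2 = x_n$.

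The main obstacle is the bookkeeping in the middle paragraph: aligning the index offsets ($k\leftrightarrow k+3$ and $p_\ell\leftrightarrow\beta_{\ell-2}$) and verifying that each of the four step-types is correctly and exclusively captured by the bracket tests, together with the off-by-one-prone complementation identity of the last paragraph. Everything else is the routine entrywise expansion of a matrix product, and one may also phrase the same content within the paper's Hadamard/Kronecker framework by reading $M_n\cdots M_3 v_2$ as the contraction of the tensor built from the length-two factors.
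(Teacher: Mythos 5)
Your proof is correct --- the digit-extraction identities ($[1,1]_j=\beta_0$; $[3\cdot2^k,2^k]_j=1$ precisely when $\beta_k=\beta_{k+1}=1$; $[2^k,2^k]_j=0$ precisely when $\beta_k=0$), the complementation identity $2^{n-1}-j=(2^{n-2}-1)-s$ behind the $\vartheta$-condition, and your handling of the $b$-test subtlety all check out --- but it takes a genuinely different formal route from the paper. The paper never writes a transfer matrix: it defines the vector recurrence $\mc{r}_1=\bv x_1,x_2\ev$, $\mc{r}_n=(\mc{1}_2\times\mc{r}_{n-1})(\mc{h}_n\times\mc{1}_{2^{n-2}})$ with $\mc{h}_n=\bv 0,1,b_{n+1},a_{n+1}\ev$ (products entrywise), checks that $|(\mc{r}_n)_{2^{n-1}+1}^{2^n}|_1=x_{n+1}$, solves the recurrence in closed form as $\mc{r}_n=(\mc{1}_{2^{n-1}}\times\mc{r}_1)\prod_{k=2}^n(\mc{1}_{2^{n-k}}\times\mc{h}_k\times\mc{1}_{2^{k-2}})$ using the Hadamard--Kronecker identity from the introduction, and then reads the theorem off the factorization $\mc{h}_k=\bv 0,1,1,1\ev\bv b_{k+1},1,b_{k+1},1\ev\bv 1,1,1,a_{k+1}\ev$. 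The two arguments encode identical combinatorics: entry $j$ of $\mc{r}_{n-1}$ is exactly the weight of your path $j$ (zero for the killed paths), the zero entry $(\mc{h}_k)_1=0$ plays the role of $(M_k)_{2,2}=0$, the three Hadamard factors of $\mc{h}_k$ are your three bracket tests, and restriction to the upper half of the vector is your constraint $p_n=1$. What your route buys: it is elementary and self-contained, and it makes explicit (the path bijection, digit extraction, complementation, boundary bit) exactly the content the paper compresses into ``doing some calculations'', which is where all the off-by-one risk lives. What the paper's route buys: uniformity of method --- the same vector-recurrence-plus-product-identity scheme drives its first theorem (with concatenation in place of the Kronecker product), the nonhomogeneous extensions via the lemma, and the quadratic-map constructions of the last section, whereas the $2\times 2$ matrix expansion is tailored to this one statement.
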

\begin{proof} Define vectors: $\mc{r}_1 = \bv x_1 , x_2 \ev$, $\mc{r}_n = (\mc{1}_2 \times \mc{r}_{n-1})(\mc{h}_n \times \mc{1}_{2^{n-2}})$ 
for $n\geqslant 2$, where $\mc{h}_n = \bv 0,1,b_{n+1},a_{n+1}\ev$.
It can be easily shown that $|(\mc{r}_n)_{2^{n-1}+1}^{2^n}|_1 = x_{n+1}$.
Solving  the recurrence equation we get: 
$r_n = (\mc{1}_{2^{n-1}} \times \mc{r}_1) \prod_{k=2}^{n} (\mc{1}_{2^{n-k}} \times \mc{h}_k \times \mc{1}_{2^{k-2}})$.
Taking into account that $\mc{h}_k = \bv 0,1,1,1 \ev \bv b_{k+1},1,b_{k+1},1\ev \bv 1,1,1, a_{k+1}\ev $ 
and doing some calculations we get the result.\end{proof}

\pmn The following lemma allows us to generalize the result to the nonhomogeneous case.

\begin{lemma}
If $\mc{x}_1$ is arbitrary vector,
$\mc{b}_n$ is $0,1$-vector sequence,
$\mc{a}_n$ and $\mc{c}_n$ are such that
$|\mc{a}_n| = |\mc{c}_n| = |\mc{x}_1| \prod_{i=2}^n |\mc{b}_i|$;
$\mc{x}_n = \mc{a}_n (\mc{b}_n \times \mc{x}_{n-1})+\mc{c}_n$ for $n \geqslant 2$,
then
$$
 \mc{x}_n=(\mc{b}_{n,2} \times \mc{x}_1) \prod_{k=2}^n(\mc{b}_{n,k+1} \times \mc{a}_k) +
 \sum_{i=2}^n(\mc{b}_{n,i+1} \times \mc{c}_i)\prod_{k=i+1}^n(\mc{b}_{n,k+1} \times \mc{a}_k) \text{,}
$$
where $\mc{b}_{n,k} = \mc{b}_n \times \mc{b}_{n-1} \times \cdots \times \mc{b}_k$, if $k\leqslant n$
and
$\mc{b}_{n,k} = \mc{1}_1$, if $k > n$.
\end{lemma}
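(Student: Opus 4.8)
The plan is to prove the formula by induction on $n$, exploiting the Hadamard distributivity property $(\mc{a}\mc{c}) \times (\mc{b}\mc{d}) = (\mc{a} \times \mc{b})(\mc{c} \times \mc{d})$ from the introduction together with the hypothesis that each $\mc{b}_n$ is a $0,1$-vector. For the base case $n=1$ I would check that the right-hand side collapses to $\mc{x}_1$: the product $\prod_{k=2}^1$ is empty and the sum $\sum_{i=2}^1$ vanishes, while $2 > 1$ forces $\mc{b}_{1,2} = \mc{1}_1$, so $\mc{b}_{1,2} \times \mc{x}_1 = \mc{x}_1$.

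For the inductive step, I would assume the formula for $\mc{x}_{n-1}$ and substitute it into $\mc{x}_n = \mc{a}_n(\mc{b}_n \times \mc{x}_{n-1}) + \mc{c}_n$. The heart of the argument is to push the factor $\mc{b}_n \times (\cdot)$ through every Hadamard product occurring in $\mc{x}_{n-1}$. The key observation is that, since $\mc{b}_n$ has entries in $\{0,1\}$, it is Hadamard-idempotent, so $\mc{b}_n = \mc{b}_n \mc{b}_n \cdots \mc{b}_n$ with as many factors as convenient. Matching a Hadamard product $\mc{u}_1 \mc{u}_2 \cdots \mc{u}_m$ of equal-length factors against $m$ copies of $\mc{b}_n$ and applying the distributivity property $m-1$ times yields $\mc{b}_n \times (\mc{u}_1 \mc{u}_2 \cdots \mc{u}_m) = \prod_{j=1}^m (\mc{b}_n \times \mc{u}_j)$, where bilinearity of $\times$ over $+$ first lets me treat the outer sum term by term. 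Associativity of the Kronecker product then converts each factor $\mc{b}_n \times (\mc{b}_{n-1,k} \times \cdot)$ into $\mc{b}_{n,k} \times \cdot$, upgrading every $\mc{b}_{n-1,\ast}$ to $\mc{b}_{n,\ast}$.

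It then remains to reconcile the index bookkeeping. Multiplying the homogeneous term by $\mc{a}_n$ and using $\mc{b}_{n,n+1} = \mc{1}_1$, hence $\mc{b}_{n,n+1} \times \mc{a}_n = \mc{a}_n$, absorbs $\mc{a}_n$ as the $k=n$ factor and extends the product to $\prod_{k=2}^n$; the same absorption extends each summand's product to $\prod_{k=i+1}^n$. Finally the added vector $\mc{c}_n$ is precisely the missing $i=n$ summand, because $\mc{b}_{n,n+1} \times \mc{c}_n = \mc{c}_n$ and the product $\prod_{k=n+1}^n$ is empty. Collecting these pieces reproduces the claimed formula for $\mc{x}_n$.

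I expect the distributivity step to be the main obstacle: it is valid only because $\mc{b}_n$ is a $0,1$-vector, so that it may be replicated into a Hadamard power of arbitrary length without change. The routine but essential part is verifying the length hypotheses of $(\mc{a}\mc{c}) \times (\mc{b}\mc{d}) = (\mc{a}\times\mc{b})(\mc{c}\times\mc{d})$ at each application, namely that every $\mc{u}_j$ carries the common length $|\mc{x}_1|\prod_{i=2}^{n-1}|\mc{b}_i|$ while each copy of $\mc{b}_n$ has length $|\mc{b}_n|$, so that the tensor and Hadamard dimensions remain compatible throughout.
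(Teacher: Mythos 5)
Your proof is correct, and since the paper omits this proof entirely (it says only ``The proof is straightforward''), yours is precisely the intended argument: induction on $n$, unrolling the recurrence term by term with the mixed-product property $(\mc{a}\mc{c})\times(\mc{b}\mc{d})=(\mc{a}\times\mc{b})(\mc{c}\times\mc{d})$ stated in the introduction, then reindexing via $\mc{b}_{n,n+1}=\mc{1}_1$. The one point worth emphasizing is your observation that the $0,1$ hypothesis amounts to Hadamard idempotence of $\mc{b}_n$, which is exactly what licenses $\mc{b}_n\times(\mc{u}_1\mc{u}_2\cdots\mc{u}_m)=\prod_{j=1}^m(\mc{b}_n\times\mc{u}_j)$; this is the crux rather than a side remark, since without it the stated formula is false --- already for $n=3$, zero $\mc{c}_i$, and a scalar $\mc{b}_3=\bv b\ev$, the right-hand side carries $b^2$ where the recurrence produces $b$.
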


\pmn The proof is strightforward.

\pmn Vectors $\mc{r}_n'$ for similar nonhomogeneous sequence 
$x_n' = a_n x_{n-1}' + b_n x_{n-2}' + c_n$
such that $|(\mc{r}_n')_{2^{n-1}+1}^{2^n}|_1 = x_{n+1}'$,
are defined as follows: $\mc{r}_1' = \bv x_1 , x_2 \ev$, 
$\mc{r}_n' = (\mc{1}_2 \times \mc{r}_{n-1}')(\mc{h}_n \times \mc{1}_{2^{n-2}}) + c_n ( \mc{0}_{2^n-1} \cat \mc{1}_1  )$ 
for $n\geqslant 2$. To use the lemma we should, of course, 
do substitutions $\mc{a}_n=\mc{h}_n \times \mc{1}_{2^{n-2}}$ and 
$\mc{b}_{n,k} = \mc{1}_{2^{n-k+1}}$.

\begin{theorem}
If $w_0$ is arbitrary number, $a_{n,j}$ is arbitrary number sequence and $w_n = \sum_{j=0}^{n-1}a_{n,j}w_j$ for $n\in \mathbb{N}$, then
$$
\sum_{j=0}^nw_j = w_0 \sum_{\mc{v}\in \mathbb{V}_n}\prod_{k=1}^{|\mc{v}|}a_{(\mc{v})_k , (\mc{0}_1 \cat \mc{v})_k }\text{,}
$$
where set $\mathbb{V}_n$ consists of all vectors $\mc{v}$ such that $1 \leqslant(\mc{v})_{i-1} < (\mc{v})_i \leqslant n$ for $2 \leqslant i \leqslant |\mc{v}|$.
\end{theorem}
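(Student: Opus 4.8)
The plan is to read the right-hand side as a sum of weighted paths in a directed acyclic graph and to reduce the whole statement to the single-term identity $w_i = w_0 P(i)$, where $P(i)$ is a path sum ending at vertex $i$; the outer sum over $j$ then becomes a regrouping of paths by their endpoint. First I would set up the graph: take $\{0,1,\dots,n\}$ as vertices and, for each pair $j<i$, a directed edge $j\to i$ carrying weight $a_{i,j}$. For $0\le i\le n$ define $P(i)$ to be the sum, over all strictly increasing sequences $0=u_0<u_1<\cdots<u_m=i$, of the products $a_{u_1,u_0}a_{u_2,u_1}\cdots a_{u_m,u_{m-1}}$ of edge weights along the path, with the convention that $i=0$ is the empty path of product $1$, so $P(0)=1$.

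The bookkeeping step is to match $\mathbb{V}_n$ with these paths. Each $\mc{v}=\bv v_1,\dots,v_m\ev\in\mathbb{V}_n$ is exactly such a path with the initial vertex $0$ suppressed; reading off $(\mc{0}_1\cat\mc{v})_1=0$ and $(\mc{0}_1\cat\mc{v})_k=v_{k-1}$ for $k\ge 2$, the factor $\prod_{k=1}^{|\mc{v}|}a_{(\mc{v})_k,(\mc{0}_1\cat\mc{v})_k}$ is precisely $a_{v_1,0}a_{v_2,v_1}\cdots a_{v_m,v_{m-1}}$, i.e. the weight of the path $0\to v_1\to\cdots\to v_m$. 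Thus I would read $\mathbb{V}_n$ as the set of all strictly increasing sequences with values in $\{1,\dots,n\}$ (the empty sequence included), which are in one-to-one correspondence with paths starting at $0$ and ending at some $i$ with $0\le i\le n$.

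The key step is the identity $w_i=w_0 P(i)$ for all $0\le i\le n$, which I would prove by strong induction on $i$. Splitting each path $0\to i$ at its last edge $j\to i$ gives $P(i)=\sum_{j=0}^{i-1}a_{i,j}P(j)$, which structurally matches the hypothesis $w_i=\sum_{j=0}^{i-1}a_{i,j}w_j$; multiplying the former by $w_0$ and substituting the inductive assumption $w_j=w_0 P(j)$ for $j<i$ yields $w_0 P(i)=\sum_{j=0}^{i-1}a_{i,j}w_j=w_i$, with the base case $w_0=w_0 P(0)$ handled by $P(0)=1$. Finally I would sum over $0\le i\le n$: since the path sets for distinct endpoints are disjoint and their union is exactly $\mathbb{V}_n$, we get $\sum_{i=0}^n w_i=w_0\sum_{i=0}^n P(i)=w_0\sum_{\mc{v}\in\mathbb{V}_n}\prod_{k=1}^{|\mc{v}|}a_{(\mc{v})_k,(\mc{0}_1\cat\mc{v})_k}$, which is the asserted formula.

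I expect the substance of the argument to be routine once the graph picture is in place; the only delicate points are bookkeeping. The first is making sure the degenerate contributions line up, namely that the empty vector $\mc{v}$ (empty product equal to $1$) is what supplies the $i=0$ term $w_0$ in the sum. The second is pinning down the convention for $\mathbb{V}_n$ so that the correspondence between its elements and paths $0\to i$ with $0\le i\le n$ is genuinely exhaustive and injective; this is where I would be careful that the stated inequalities force $1\le v_1<\cdots<v_{|\mc{v}|}\le n$ and admit the empty sequence, so that no path is counted twice and none is omitted.
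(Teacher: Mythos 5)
Your proof is correct, and it reaches the theorem by a genuinely different route than the paper does. The paper stays inside its Hadamard-product formalism: it defines $\mc{w}_0 = \bv w_0 \ev$, $\mc{w}_1 = \bv w_0 , a_{1,0} w_0 \ev$, $\mc{w}_n = \mc{w}_{n-1} \cat (\mc{w}_{n-1} \mc{q}_n)$, shows by induction that $|\mc{w}_n|_1 = \sum_{j=0}^n w_j$, solves this vector recurrence in closed form as $\mc{w}_n = w_0 \prod_{k=1}^n (\mc{1}_{2^{n-k}} \times (\mc{1}_{2^{k-1}} \cat \mc{q}_k))$, and then converts the entry sum into the combinatorial sum by binary bookkeeping: the $j$th entry equals $w_0\prod_i a_{k_i+1,k_{i-1}+1}$, where the $k_i$ are the positions of the $1$-digits of $j-1$, and these digit sets run exactly over $\mathbb{V}_n$. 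You replace all of this machinery by a direct strong induction on the scalar identity $w_i = w_0 P(i)$, where $P(i)$ is the weighted sum over increasing paths $0\to\cdots\to i$, obtained by splitting each path at its last edge so that $P$ satisfies the same recurrence as $w$; the theorem then follows by regrouping $\mathbb{V}_n$ according to the last entry of $\mc{v}$. Both proofs rest on the same underlying bijection (increasing sequences in $\{1,\dots,n\}$ indexing the products), but yours is shorter, entirely elementary, and actually proves the finer termwise statement $w_i = w_0 P(i)$, i.e., a nonrecurrent formula for each $w_i$ separately, of which the stated partial-sum identity is a corollary; the paper's proof extracts only the sum (the individual $w_i$ sit inside blocks of $\mc{w}_n$, but the proof never isolates them). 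What the paper's heavier route buys is coherence with its theme and reusability: the same vector construction is what lets the author pass immediately to the nonhomogeneous sequence $w_n' = c_n + \sum_{j=0}^{n-1} a_{n,j} w_j'$ in the paragraph following the proof. One shared caveat: as literally stated, the defining condition of $\mathbb{V}_n$ is vacuous for $|\mc{v}| \leqslant 1$, so it does not actually force $1 \leqslant (\mc{v})_1 \leqslant n$ for singleton vectors; your reading (strictly increasing sequences with entries in $\{1,\dots,n\}$, the empty vector included) is the intended one, and it is the reading under which both proofs, and the theorem itself, are valid.
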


\begin{proof}Define vectors: $\mc{w}_0 = \bv w_0 \ev$, $\mc{w}_1 = \bv w_0 , a_{1,0} w_0 \ev$, 
$\mc{w}_n = \mc{w}_{n-1} \cat (\mc{w}_{n-1} \mc{q}_n)$ for $n\geqslant 2$, 
where $(\mc{q}_n)_1 = a_{n,0}$, $(\mc{q}_n)_{2^{k-1}+1}^{2^k} = a_{n,k} \mc{1}_{2^{k-1}}$
for $1 \leqslant k\leqslant n-1$.
From the recurrence equation it follows that if equality $|\mc{w}_l|_1 = \sum_{j=0}^l w_j$ is true for $l=n-1\geqslant 1$, then
it is true for $l=n$; it is true for $l=1$, so we conclude that it is true for any $l \in \mathbb{N}$.
\pmn Solving  the recurrence equation we get: $\mc{w}_n = w_0 \prod_{k=1}^n (\mc{1}_{2^{n-k}} \times (\mc{1}_{2^{k-1}} \cat \mc{q}_k)) $
for $n \in \mathbb{N} $. Noting that $\mc{q}_k = \bv a_{k,\lceil \log_2 j\rceil} \ev_{j=1}^{2^{k-1}}$ we have
$$
|\mc{w}_n|_1 = w_0 \sum_{j=1}^{2^n}\prod_{k=1}^n([2^{k-1},2^{k-1}]_j (a_{k,\lceil \log_2 ( 1+ (j-1)\bmod2^{k-1})\rceil} -1)  +1)  \text{.}
$$
\pmn The quantity $[2^k,2^k]_j$ equals the value of $k$th digit of number $(j-1)$ in binary numeral system.
If $k_i$ is serial number of $i$th digit $1$, then $\lceil \log_2 (1+{(j-1)\bmod 2^{k_i}})\rceil = 1+k_{i-1}$.
Therefore, $|\mc{w}_n|_1 = w_0 \sum_{j}\prod_{i}a_{k_i + 1,k_{i-1}+1}$,
assuming $k_{0}=-1$. Here $(k_i +1)$ ranges over $\mc{v} \in \mathbb{V}_n$: $k_i +1 = (\mc{v})_i$.\end{proof}

\pmn In the same way vectors $\mc{w}_n'$ for nonhomogeneous sequence 
$w_n' = c_n + \sum_{j=0}^{n-1}a_{n,j}w_j'$
such that $|\mc{w}_l'|_1 = \sum_{j=0}^l w_j$,
are defined as follows: 
$\mc{w}_0' = \bv w_0 \ev$, $\mc{w}_1' = \bv w_0 , c_1 + a_{1,0} w_0 \ev$, 
$\mc{w}_n' = \mc{w}_{n-1}' \cat (\mc{w}_{n-1}' \mc{q}_n) + c_n (\mc{0}_{2^n-1} \cat \mc{1}_1) $ for $n\geqslant2$.

\pbn
\section{Quadratic map}

It is well-known that in many cases iterations of a polynomial of degree 2 in the general case, i.e. solutions of quadratic map, can be expressed 
by iterations of a polynomial of degree 2 with one parameter.

\begin{theorem}
Let $p^{(0)}(x)=x$, $p^{(n)}(x)=p^{(n-1)}(p(x))$ $($ for $n \in \mathbb{N}$ $)$ be iterations of polynomial $p(x)=\lambda (x+1)x$, then
$$
p^{(n)}(x)=\sum_{k=1}^{2^n}x^k\sum_{i=(k-1)\omega_n}^{k \omega_n -1} \prod_{j=1}^n \lambda ^{\mu_{i,j-1}}\binom{\mu_{i,j-1}}{\mu_{i,j} - \mu_{i,j-1}}\text{,}
$$
where $\omega_n = 2^{\frac{n(n-1)}{2}}$, $\mu_{i,j} = \lceil \frac{1+ i \bmod \omega_{j+1}}{\omega_j} \rceil$.
\end{theorem}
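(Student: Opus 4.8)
The plan is to reduce the statement to a one-step recursion on the coefficients of $p^{(n)}$ and then to recognize the displayed formula as the fully unrolled solution of that recursion, indexed by lattice paths. Write $p^{(n)}(x)=\sum_k c^{(n)}_k x^k$; since $p(0)=0$ and $\deg p = 2$, the polynomial $p^{(n)}$ has degree $2^n$ and vanishing constant term, with $c^{(0)}_k = [k=1]$. Substituting $y=p(x)=\lambda x (x+1)$ into $p^{(n)}(x)=p^{(n-1)}(p(x))=\sum_k c^{(n-1)}_k\bigl(\lambda x(x+1)\bigr)^k$ and expanding $(x+1)^k$ by the binomial theorem gives, after reading off the coefficient of $x^l$, the recursion
$$
c^{(n)}_l = \sum_{k} c^{(n-1)}_k\,\lambda^{k}\binom{k}{l-k}\text{,}
$$
where the binomial vanishes unless $\lceil l/2\rceil \le k \le l$. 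Each factor $\lambda^{k}\binom{k}{l-k}$ is exactly the weight appearing in the product in the theorem, with $k=\mu_{i,j-1}$ and $l=\mu_{i,j}$; this is the observation that makes the formula plausible.

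Next I would read the displayed sum as a sum over weighted paths. Iterating the recursion from $c^{(0)}_1 = 1$ expresses $c^{(n)}_k$ as a sum over all sequences $1=k_0,k_1,\ldots,k_n=k$ of the product $\prod_{j=1}^n \lambda^{k_{j-1}}\binom{k_{j-1}}{k_j-k_{j-1}}$, where forbidden steps contribute $0$ through vanishing binomials. The role of the index $i$ is to enumerate these paths in a mixed-radix system: because $\omega_{j+1}/\omega_j = 2^{j}$ and $\deg p^{(j)} = 2^j$, the quantity $\mu_{i,j}$ extracts the $j$th digit of $i$, namely the value $k_j\in\{1,\ldots,2^j\}$, and it is constant on blocks of length $\omega_j$. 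One checks directly that $\mu_{i,0}=1$ and that for $i$ in the stated range $(k-1)\omega_n \le i \le k\omega_n-1$ one has $i<\omega_{n+1}$, so $\mu_{i,n}=\lceil (1+i)/\omega_n\rceil = k$; thus the path endpoints are $k_0=1$ and $k_n=k$, as required, and the length $\omega_n=\prod_{j=1}^{n-1}2^{j}$ of the $i$-range matches the number of free intermediate indices.

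I would then prove the identity by induction on $n$, the case $n=0$ being immediate. Fix $l$ and write each $i$ in $[(l-1)\omega_n,\,l\omega_n-1]$ as $i=(l-1)\omega_n+i'$ with $i'=i\bmod\omega_n$. Since $\mu_{i,j}$ for $0\le j\le n-1$ depends only on $i\bmod\omega_{j+1}$ and $\omega_{j+1}\mid\omega_n$, one has $\mu_{i,j}=\mu_{i',j}$ there, while $\mu_{i,n}=l$ from the previous paragraph. Peeling the $j=n$ factor off the product and regrouping the remaining sum over $i'$ according to the value $m=\mu_{i',n-1}$ --- whose fibre is exactly the block $(m-1)\omega_{n-1}\le i'\le m\omega_{n-1}-1$ --- reproduces the level-$(n-1)$ formula inside, so the inner sum equals the claimed coefficient at level $n-1$ and index $m$. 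This collapses the level-$n$ expression to $\sum_m c^{(n-1)}_m\,\lambda^{m}\binom{m}{l-m}$, which by the inductive hypothesis is precisely the coefficient recursion, completing the step.

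I expect the main obstacle to be the index bookkeeping in the inductive step: verifying cleanly that the splitting $i=(l-1)\omega_n+i'$ respects the definition of $\mu$ at every level simultaneously, that the fibres of $i'\mapsto\mu_{i',n-1}$ are the contiguous blocks claimed, and that the mismatch between the summation range of $m$ (up to $2^{n-1}$) and of $l$ (up to $2^n$) is absorbed by the vanishing of out-of-range binomials. Everything else --- the coefficient recursion and the path interpretation --- is routine. One could equivalently cast the whole argument in the vector language of the earlier theorems, building a recursive vector of coefficients and solving it with the Kronecker-product identity, but the direct induction above seems the most transparent.
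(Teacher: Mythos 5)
Your proposal is correct, and it takes a genuinely different route from the paper's own proof. Both arguments start from the same coefficient recursion: your $c^{(n)}_l=\sum_k c^{(n-1)}_k\lambda^k\binom{k}{l-k}$ is the paper's $g_{n,k}=\sum_i q_{k,i}g_{n-1,i}$ (note the paper writes $q_{k,i}=\lambda^i\binom{i}{k-1}$, an apparent typo for $\lambda^i\binom{i}{k-i}$, which is what your derivation and the theorem's own statement require). From there the paper stays inside its vector formalism: it sets $\mc{p}_0=\bv 1\ev$, $\mc{p}_n=(\mc{1}_{2^n}\times\mc{p}_{n-1})(\mc{q}_n\times\mc{1}_{\omega_{n-1}})$, observes that the $k$th block of length $\omega_n$ in $\mc{p}_n$ sums to $g_{n,k}$, solves the vector recursion in closed form as the Hadamard product $\mc{p}_n=\prod_{k=1}^n\bigl(\mc{1}_{2^{(n+k+1)(n-k)/2}}\times\mc{q}_k\times\mc{1}_{\omega_{k-1}}\bigr)$, and then reads off the $i$th entry of that product --- this last step, glossed in the paper as ``doing some calculations,'' is exactly where the mixed-radix functions $\mu_{i,j}$ arise, since the entry of $\mc{1}_a\times\mc{q}_k\times\mc{1}_b$ at position $i$ is determined by a digit of $i$. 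Your proof replaces the vector-solution step by a direct induction on $n$: peel off the $j=n$ factor, use that the fibres of $i'\mapsto\mu_{i',n-1}$ are the contiguous blocks of length $\omega_{n-1}$, and apply the level-$(n-1)$ formula on each block; your verifications of the digit facts ($\mu_{i,0}=1$, $\mu_{i,n}=k$ on the stated range, invariance of lower digits under the shift $i=(l-1)\omega_n+i'$) are all accurate. The two proofs encode the same combinatorics --- weighted paths $1=k_0,k_1,\ldots,k_n=k$ enumerated in mixed radix --- but yours is self-contained and makes explicit precisely the bookkeeping the paper compresses into one sentence, whereas the paper's factored form of $\mc{p}_n$ is an intermediate object of independent interest (it underlies the subsequent Remark expressing $p^{(n)}(x)$ as a Hadamard product of $n$ functions) and serves the paper's broader aim of advertising the Kronecker--Hadamard technique.
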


\begin{proof}Any polynomial $p^{(n)}(x)$ can be expressed as follows: $p^{(n)}(x) = \sum_{k=1}^{2^n} g_{n,k}x^k$, where 
$g_{n,k}=g_{n,k}(\lambda)$ are polynomials defined by equalities:
$g_{0,1}=1$, $g_{n,k}=\sum_{i=1}^{2^{n-1}}q_{k,i} g_{n-1,i}$ (for $ 1 \leqslant k \leqslant2^n$), where
$q_{k,i}=\lambda^i \binom{i}{k-1}$.
\pmn Let $\mc{p}_0 = \bv 1 \ev$, $\mc{p}_n = (\mc{1}_{2^n} \times \mc{p}_{n-1}) (\mc{q}_n \times \mc{1}_{\omega_{n-1}})$
for $\in \mathbb{N}$, where
$$\mc{q}_n = (q_{1,j})_{j=1}^{2^{n-1}} \cat (q_{2,j})_{j=1}^{2^{n-1}} \cat \ldots \cat (q_{2^n,j})_{j=1}^{2^{n-1}}
= \bv q_{\lceil i/2^{n-1}\rceil, 1+ (i-1)\bmod 2^{n-1} }\ev_{i=1}^{2^{2n-1}} \text{.} $$
\pmn Then $|(\mc{p}_n)_{1+(k-1)\omega_n}^{k\omega_n}|_1 = g_{n,k}$. Solving  the equation we get: 
$$
\mc{p}_n = \prod_{k=1}^n \biggl( \mc{1}_{2^{\frac{(n+k+1)(n-k)}{2}}} \times \mc{q}_k \times \mc{1}_{\omega_{k-1}} \biggr) \text{.}
$$
\pmn Doing some calculations with $\mc{1}_{\infty} \times \mc{q}_k \times \mc{1}_{\omega_{k-1}}$, we obtain the result. \end{proof}

\pmn \emph{Remark}. Using generating polynomial
$$
|\mc{q}_k|_t=\lambda(1+t^m)\frac{(\lambda t^m + \lambda t^{2m+1})^m -1}{\lambda t^m + \lambda t^{2m+1}-1} \text{,}
$$
where $m=2^{k-1}$ and taking in account formula $|\mc{a} \times \mc{b}|_t =|\mc{a}|_{t^{|\mc{b}|}} |\mc{b}|_t$ we can
represent $p^{(n)}(x)$
as Hadamard product of $n$ functions. 

~\

Let's consider another episode.
Let $s^{(0)}(x)=x$, $s^{(n)}(x)=s^{(n-1)}(s(x))$ (for $n \in \mathbb{N} $)
be iterations of polynomial $s(x)= s_\lambda(x)=\lambda (x^2-1)+1$ and $\lambda \neq 0$.
Define vectors: $\mc{s}_1 = \bv x-1, 1\ev$, $\mc{s}_n = \lambda \mc{s}_{n-1}^{\langle 2 \rangle} - (\lambda -1) (\mc{0}_{2^{2^{n-1}}-1} \cat \mc{1})$
for $n \geqslant 2$, where triangular brackets indicate Kronecker degree. Obviously, $|\mc{s}_n|_1 = s(|\mc{s}_{n-1}|_1) = s^{(n)}(x)$.

\pmn Solving  the equation we get: 
$$
\mc{s}_n=\lambda \mc{s}_{n-1}^{\langle 2 \rangle} \mc{l}_{n-1} =
\lambda^{2^{n-1}-1} \mc{s}_1^{\langle 2^{n-1} \rangle} \mc{r}_{\lambda,n-1} =
\lambda^{2^{n-1}-1} \mc{s}_1^{\langle 2^{n-1} \rangle} (\mc{r_{\lambda}})_1^{2^{2^{n-1}}} \text{,}
\eqno{(1)}$$
where $\mc{l}_n = \mc{1}_{2^{2^n}-1} \cat \bv \lambda^{-1} \ev$,  
$\mc{r}_{\lambda,n} = \prod_{i=1}^{n}\mc{l}_i^{\langle 2^{n-i}\rangle}$,  
$\mc{r_\lambda} = \prod_{i=1}^{\infty}\mc{l}_i^{\langle \infty \rangle}$.  
\pmn Therefore, we have
$$
s_\lambda^{(n)} (x) = \lambda^{2^{n-1}-1}\sum_{j=1}^{2^{2^n}}(x-1)^{(\mc{h}_n)_j} \lambda^{-(\log_2 \mc{r}_{2,n-1})_j}\text{,}
\eqno{(2)}$$
where $\mc{h}_n = \log_2\bv 2,1 \ev^{\langle 2^{n-1} \rangle}$.
And it can be easily shown that 
$$(\mc{h}_n)_j = 2^{n-1}-\sum_{k=0}^{\infty}[2^k,2^k]_j \text{   and   } 
(\log_2 \mc{r}_{2,n})_j  = \sum_{k,i=0}^{\infty}[2^{2^i k}(2^{2^i}-1),2^{2^i k}]_j 
$$
by using simple formula 
$$(\log_2 (\mc{1}_{m-1} \cat \bv 2  \ev)^{\langle n \rangle})_j = \sum_{k=0}^{n-1}[m^k(m-1),m^k]_j \text{    (for } j\leqslant m^n \text{).} $$
\pmn Substituting $2$ for $x$ in (2), we have
$$
s_\lambda^{(n)} (2) = \lambda^{2^{n-1}-1}\sum_{k=1}^{2^{n-1}}\kappa_{n,k}{\lambda}^{-k}\text{,}
$$
where $\kappa_{n,k}$ denotes the number of elements in $\log_2 \mc{r}_{2,n-1}$ that equal to $k$.
This function can be defined recursively as follows: 
$$\kappa_{n,0} = 3^{2^{n-1}} ,
\kappa_{1,k} = \delta_{k,1} ,
\kappa_{n,k} =\delta_{k,2^{n-1}} - \delta_{k,2^{n-2}} + \sum_{i=0}^k \kappa_{n-1,k-i} \kappa_{n-1,i} \text{.}
$$
\pmn Evaluating $\kappa_{n,k}$ we have:
$$\kappa_{n,1} = 2^{n-1} 3^{2^{n-1}-1},
\kappa_{n,2} = -\kappa_{n,1} (\frac{1}{12}-\sum_{i=0}^{n-1}\kappa_{i+1,1}({\kappa_{i,1}}^2 + \delta_{k,2^{n-1}} - \delta_{k,2^{n-2}})
$$
\pmn and so on.

\pmn \emph{Remark}. Replacing $2^{n-1}$ by $n$ in the last expression of (1) we get new sequence of vectors:
$\mc{s}_n' = \lambda^{n-1} \mc{s}_1^{\langle n \rangle} (\mc{r})_1^{2^n}$.
Let $f_n(x) = |\mc{s}_n'|_1$. Hypothesis:
$$
f_n(x)=
\begin{cases}
f_{n/2}(s(x)) \text{, if } n \text{ is even} \\
\lambda x f_{n-1}(x) \text{, if } n \text{ is odd}\\
x \text{, if } n=1
\end{cases}
$$

\vspace{5mm}

\vspace{5mm}
\end{document}